\newtheorem{lem}{Lemma}[section]
\newtheorem{prop}[lem]{Proposition}
\newtheorem{rem}[lem]{Remark}
\newtheorem{theo}[lem]{Theorem}
\newtheorem*{alpatience}{Heap sorting algorithm for $(U_i,\nu_i)$}
\renewcommand{\P}{\mathbb{P}}
\newcommand{\R}{\mathbb{R}}
\newcommand{\N}{\mathbb{N}}
\newcommand{\E}{\mathbb{E}}
\newcommand{\defeq}{:=}
\newcommand{\dr}{\mathbf{R}}
\begin{document}

\title{Almost-sure asymptotic for the number of heaps inside a random sequence}
\author{\textsc{Basdevant A.-L.}\footnote{Laboratoire Modal'X, Université Paris Nanterre, France. email: anne-laure.basdevant@u-paris10.fr} and \textsc{Singh A.}\footnote{Laboratoire de Mathématiques d'Orsay, Univ. Paris-Sud, CNRS, France. email: arvind.singh@math.u-psud.fr}}
\date{\today}   
\maketitle

\begin{abstract}
We study the minimum number of heaps required to sort a random sequence using a generalization of Istrate and Bonchis's algorithm (2015). In a previous paper, the authors proved that the expected number of heaps grows logarithmically. In this note, we improve on the previous result by establishing the almost-sure and $L^1$ convergence.
\end{abstract}

\bigskip

\noindent{\bf {\textsc MSC 2010 Classification}:} 60F15,  60G55, 60K35.\\
\noindent{\bf Keywords:} Hammersley's process; Heap sorting, Patience sorting; Longest increasing subsequences; Interacting particles systems; Almost-sure convergence.

\section{Introduction}
The so-called \emph{Ulam's problem} consists in estimating the length of the longest decreasing subsequence in a uniform random permutation $\sigma$ of $\{1,\ldots,n\}$. By duality, this question is equivalent to computing the minimal number of disjoint increasing sub-sequences of $\sigma$ required to partition $\{1,\ldots,n\}$. In \cite{Byersetal}, Byers \emph{et al} proposed variations on this problem where the question of finding \emph{increasing subsequences} in a permutation is replaced by that of finding \emph{heapable subsequences}. Subsequently, Istrate and Bonchis \cite{IstrateBonchis}  introduced a modification of the classical patience sorting algorithm called \emph{heap sorting algorithm} which now computes the minimal number of binary heaps required to partition $\{1,\ldots,n\}$. 
\medskip

In \cite{BGGS}, we study a generalization of the algorithm which also allows for the heaps to be random. More precisely, let $\mu$ be a fixed offspring distribution on $\{1,2,\ldots,\}$. Let $(U_i,\nu_i)$ be an i.i.d. sequence where $U_i$ and $\nu_i$ are independent, $U_i$ is uniform on $[0,1]$ and  $\nu_i$ is distributed as $\mu$. We use the following streaming algorithm to sort this sequence into Galton-Watson heaps \emph{i.e.} labeled Galton-Watson trees with the condition that the label of each vertex is larger than that its ancestors. 

\begin{alpatience}\
\begin{itemize}
\item We start at time $1$ with a single tree containing a unique vertex $(U_1,\nu_1)$ and set $\mathbf{R}(1) =1$.
\item At time $n$, we have $\dr(n)$ trees.  To each vertex of these trees is associated a pair 
$(U,\nu)$. The variable $U$ represents the label of the vertex whereas $\nu$ prescribes the maximum number of offsprings that the vertex may have. A vertex $(U,\nu)$ is said to be \emph{alive} if it  has strictly less than $\nu$ children. 
\item At time $n+1$, we add $(U_{n+1},\nu_{n+1})$ as the children of the vertex which is still alive and which has the largest label smaller than $U_{n+1}$. If no such vertex exists, we create a new tree with root $(U_{n+1},\nu_{n+1})$. 
\end{itemize}
\end{alpatience}

This algorithm sorts the sequence $(U_i,\nu_i)$, in their order of arrival, and in such way that
\begin{enumerate}
\item All the trees have the heap property.
\item The trees are asymptotically Galton-Watson distributed with offspring distribution $\mu$ and, at all time, the vertex with label $U_i$ has at most $\nu_i$ children.
\end{enumerate}
\begin{figure}
\begin{center}
\includegraphics[width=14.5cm]{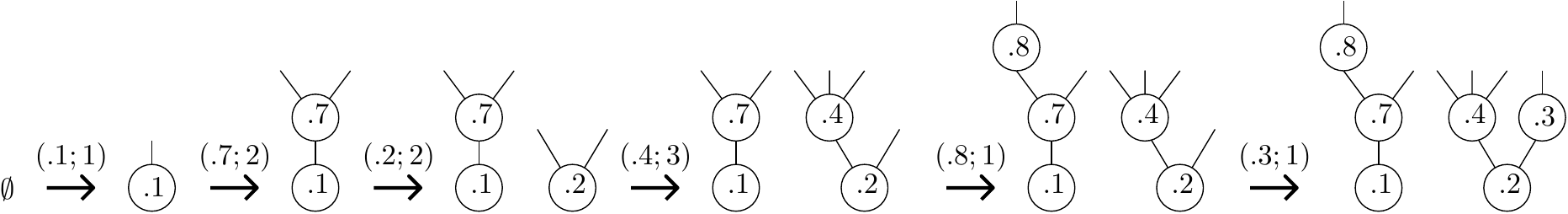}
\caption{ \label{fig:exampleHSA} Example of heap sorting algorithm for the sequence $(.1,1),(.7,2),(.2,2),(.4,3),(.8,1),(.3,1), \ldots$ }
\end{center}
\end{figure}
See Figure \ref{fig:exampleHSA} for an illustration of the procedure. It is easy to verify that, remarkably, this greedy algorithm is optimal in minimizing the number of trees at all time. In \cite{BGGS}, we proved
 that, for any offspring distribution $\mu$ which is not the Dirac mass in $1$ (\emph{i.e.} we exclude Ulam's problem), then the expectation of the number of trees grows logarithmically as it was predicted in \cite{IstrateBonchis}:
 \begin{equation}
\hbox{there exists $c_\mu \in (1,\infty)$ s.t.}\quad\lim_{n\to \infty} \frac{\E[\mathbf{R}(n)]}{\log n} = c_\mu.
 \end{equation}
 The aim of this note is to bootstrap the result above, proving that the limit of $\dr(n)/\log n$ also holds almost surely and in $L^1$.
\begin{theo}\label{theo:main}
For any offspring distribution $\mu \neq \delta_1$, there exists $c_\mu \in (1,\infty)$ such that 
\begin{equation*}
 \lim_{n\to \infty} \frac{\mathbf{R}(n)}{\log n} = c_\mu \qquad \mbox{a.s. and in $L^1$.}
 \end{equation*}
 \end{theo}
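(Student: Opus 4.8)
The plan is to upgrade the known mean asymptotics to almost-sure convergence, from which the $L^1$ statement follows essentially for free: since $\dr(n)/\log n \ge 0$ would converge almost surely to the constant $c_\mu$ while $\E[\dr(n)]/\log n \to c_\mu$, a nonnegative version of Scheffé's lemma (apply Fatou to $2\min(\dr(n)/\log n, c_\mu)$) yields $\E\bigl|\dr(n)/\log n - c_\mu\bigr| \to 0$. So I focus on the almost-sure part. The first ingredient is that $\dr(n)$ is non-decreasing in $n$, since each arriving point either joins an existing tree or opens a new one, and trees never merge or disappear. This monotonicity reduces the almost-sure statement to a subsequence: fixing $n_k := \lceil \exp(k^{1+\epsilon})\rceil$ (so that $\log n_{k+1}/\log n_k \to 1$), once $\dr(n_k)/\log n_k \to c_\mu$ a.s. is known, for $n_k \le n \le n_{k+1}$ the sandwich $\dr(n_k)/\log n_{k+1} \le \dr(n)/\log n \le \dr(n_{k+1})/\log n_k$ gives the conclusion for all $n$.

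The second ingredient is a semimartingale decomposition. Let $L_n$ denote the smallest label among the vertices alive at time $n$; a new tree is opened at step $n+1$ exactly when $U_{n+1} < L_n$. Setting $X_i := \un\{U_i < L_{i-1}\}$ and $\mathcal F_i := \sigma((U_j,\nu_j): j\le i)$, we get $\dr(n) = 1 + \sum_{i=2}^n X_i$ with $\P(X_i = 1 \mid \mathcal F_{i-1}) = L_{i-1}$, and hence the Doob decomposition $\dr(n) = 1 + M_n + A_n$, where $M_n := \sum_{i=2}^n (X_i - L_{i-1})$ is a bounded-increment martingale and $A_n := \sum_{i=2}^n L_{i-1}$ is the increasing predictable compensator with $\E[A_n] \sim c_\mu \log n$. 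The martingale term is negligible: its bracket satisfies $\langle M\rangle_n = \sum_{i=2}^n L_{i-1}(1-L_{i-1}) \le A_n$, so $\E\langle M\rangle_n = O(\log n)$, and Doob's $L^2$ maximal inequality bounds $\P(\max_{m\le n_{k+1}}|M_m| > \epsilon\log n_k)$ by a summable term, whence $M_n = o(\log n)$ a.s. by Borel--Cantelli. It therefore remains to prove $A_n/\log n \to c_\mu$ a.s. As $A_n$ is increasing, the same subsequence-and-sandwich scheme reduces this to concentration of $A_n$ around its mean, for which a variance estimate $\mathrm{Var}(A_n) = O(\log n)$ suffices (in fact any polynomial bound $o((\log n)^2)$ would do, at the price of a sparser $n_k$).

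The crux, and what I expect to be the main obstacle, is exactly this variance estimate, equivalently $\mathrm{Var}(\dr(n)) = O(\log n)$. I would try to obtain it from negative correlation of the creation indicators: if $\mathrm{Cov}(X_i,X_j)\le 0$ for $i\ne j$, then $\mathrm{Var}(\dr(n)) \le \sum_i \mathrm{Var}(X_i) \le \sum_i \E[X_i] = \E[\dr(n)] - 1 = O(\log n)$. The guiding heuristic is attractiveness: opening an extra tree at time $i$ lowers the leftmost living label and can only make later openings rarer, suggesting $\mathrm{Cov}(X_i, L_{j-1}) \le 0$ for $j > i$. Turning this into a theorem calls for a monotone coupling of the configuration dynamics: one equips the space of living configurations (vertex positions together with their remaining capacities) with a partial order under which "lower" configurations are dominated, and checks that feeding the same increments $(U_m,\nu_m)$ into two ordered configurations preserves the order while never producing more openings in the dominated one. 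Designing the order so that the interaction between labels and the capacity constraints $\nu$ stays monotone is the delicate point. Should a clean monotonicity not materialise, the fallback is to prove the weaker bound $\mathrm{Var}(\dr(n)) = o((\log n)^2)$ via a regeneration or ergodicity argument for the rescaled configuration process — which naturally lives on a logarithmic time scale, since $L_n \asymp c_\mu/n$ — and this still suffices for the conclusion.
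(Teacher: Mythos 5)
Your preparatory reductions are all sound: the monotonicity of $\dr(n)$ plus the subsequence sandwich along $n_k=\lceil \exp(k^{1+\epsilon})\rceil$, the Doob decomposition with $\P(X_i=1\mid \mathcal{F}_{i-1})=L_{i-1}$, the $L^2$--Borel--Cantelli control showing $M_n=o(\log n)$ a.s., and the Scheff\'e-type argument deducing $L^1$ convergence from a.s.\ convergence plus the known convergence of means. But the proof has a genuine gap exactly where you place the crux: the concentration of the compensator $A_n=\sum_{i\le n} L_{i-1}$, equivalently a bound $\mathrm{Var}(\dr(n))=O(\log n)$ or $O((\log n)^{2-\gamma})$, is never established --- it is only conjectured via a negative-correlation heuristic. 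This is not a small missing detail; it is the entire difficulty of the theorem. The proposed monotone coupling is delicate for a concrete reason: $L_n$ is not monotone along the dynamics (when the leftmost living vertex receives its last allowed child it dies and $L_n$ jumps up), and an extra low tree in one of two coupled configurations changes which vertex absorbs each subsequent point, so discrepancies propagate through the capacity variables $\nu$ in an uncontrolled way; no order on configurations is exhibited that the transition provably preserves. The fallback (``regeneration or ergodicity for the rescaled configuration process'') is a one-line wish, not an argument, and note also that a bare bound $\mathrm{Var}(A_n)=o((\log n)^2)$ would not suffice: Chebyshev plus Borel--Cantelli forces a subsequence so sparse that $\log n_{k+1}/\log n_k \not\to 1$ and the sandwich breaks, so you genuinely need the polynomial savings, which is just as inaccessible as the $O(\log n)$ bound.

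It is instructive to compare with how the paper closes this hole: it avoids second moments entirely, and this is precisely why the infinite-volume object $\mathcal{G}_\infty$ is introduced. On the half-plane the Poisson measure is invariant under $(u,t)\mapsto (eu,t/e)$, so the counts $R_\infty[e^i,e^{i+1}]$ of horizontal lines hitting $\{0\}\times[e^i,e^{i+1}]$ form a \emph{stationary} sequence; a coupling in which the atoms below a fixed height are deleted (which, by the monotonicity proved in the companion paper, can only increase these counts) shows the sequence is mixing, hence ergodic, and Birkhoff's theorem gives $R_\infty[1,e^n]/n\to \E[R_\infty[1,e]]$ a.s.\ using only the finiteness of first moments. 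Monotone domination then transfers the limit to the $[0,1]$ system (upper bound by $R_\infty$, lower bound through stationary mixing sequences built from shrinking strips $[0,e^{N-i}]$ and monotone convergence), and the Poissonization time change $t(n)\sim n$ carries it to $\dr(n)$. In short, where your scheme needs concentration, the paper substitutes scaling-stationarity plus ergodicity, for which no variance estimate is required. Unless you can actually prove your correlation inequality, your argument does not close, and the discrete-time filtration approach offers no stationary structure to fall back on.
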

 
As explained in \cite{BGGS} (and briefly recalled in the next section), we can associate to the heap sorting algorithm a particle system which plays the same role as \emph{Hammersley's line particle system} for Ulam's problem. One of the main result of \cite{BGGS} states that this particle system, while initially defined on compact intervals, can be extended to an infinite particle system on the whole line. Thus, the strategy to prove Theorem \ref{theo:main} is to first establish the almost sure convergence for an analog of $\dr(n)$ associated with this infinite system on $\R$ and then transfer the result back to the discrete case. In this study, the key ingredients are the remarkable scaling properties of the infinite volume system together with monotonicity arguments.

\section{Almost-sure convergence for the process on the half-plane}
We start by recalling the construction of the \emph{Hammersley's tree process} associated with the heap sorting algorithm introduced in \cite{BGGS}. Let $\mathbb{H}$ denote the upper half-plane $\R\times (0,\infty)$. Consider a point Poisson process (PPP) 
$$\Xi = (U_i,T_i,\nu_i)$$
 on $\mathbb{H}\times \N$ with intensity $du \times dt \times \mu$. For any $ a<b$, we consider the following particle system $H$ on $(a,b)\times \N$ constructed from the atoms of $\Xi$ inside the strip $(a,b)\times (0,\infty)$.
 \begin{itemize}
 \item There is no particle at time $t=0$.
 \item Given $H(t^-)$, an atom $(u,t,\nu)$ of $\Xi$ with $u\in (a,b)$ creates in $H(t)$ a new particle at position $u$ with $\nu$ lives. Furthermore, the particle in $H(t^-)$ with the largest label smaller than $u$ loses
one life (if such a particle exists) and is removed from the system if it was its last life. 
 \end{itemize}  
We can represent the genealogy of the particles using a set of vertical and horizontal lines. Here, vertical lines denote the positions of particles through time and horizontal lines connect particles to their father on their left (or to the vertical axis if they have no father). We denote $\mathcal{G}_{a,b}$ this graphical representation of the process. See Figure \ref{fig:graphrepresentation} for an illustration. 
\begin{figure}
\begin{center}
\includegraphics[height=5.5cm]{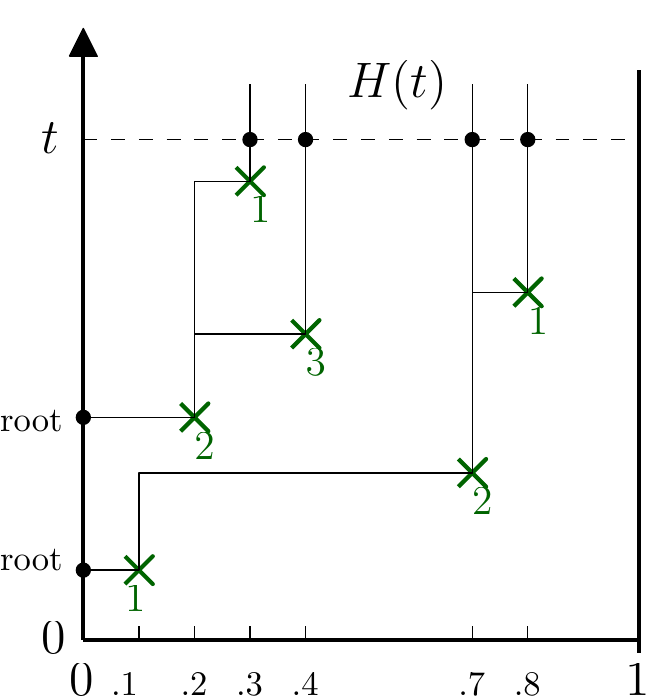}
\caption{ \label{fig:graphrepresentation} An example of the graphical representation $\mathcal{G}_{0,1}$ (which is, in fact, an embedding of the sequence of Figure \ref{fig:exampleHSA}). Crosses represent the atoms of $\Xi$. At time $t$, $H(t)$ has four particles located at position $0.3,\; 0.4,\; 0.7,\; 0.8$ with respective number of lives $1,\; 3, \; 1, \; 1$. } 
\end{center}
\end{figure}
\medskip

For $a=0$ and $b=1$, this particle system may be seen as a continuous time embedding of the heap sorting algorithm where new labels now arrive with Poissonian rate instead of integer time. Therefore, the heaps created by the algorithm are exactly the trees ``drawn'' by the graphical representation. In particular, the number of trees (equiv. heaps) created between time $s$ and $t$  is equal to the number of horizontal lines in $\mathcal{G}_{0,1}$ intersecting the vertical segment $\{0\}\times [s,t]$.
\medskip

Since incoming particles do not affect particles already present on their right, it is clear that the graphical representations $\mathcal{G}_{a,b}$ are compatible for different values of the left boundary \emph{i.e.}
$$
\hbox{for $a' < a$ the restriction of $\mathcal{G}_{a',b}$ to $(a,b)\times (0,\infty)$ coincides with $\mathcal{G}_{a,b}$.}
$$
Thus, there is no problem to define $\mathcal{G}_{-\infty,b}$. Clearly, this compatibility relation does not hold anymore when it is the right boundary that extends since new particle may ``kill'' their left neighbour. However, Theorem 4.4 of \cite{BGGS} states that the graphical representation $\mathcal{G}_{-\infty,b}$ still converges locally, almost surely,  as $b$ tends to infinity, to a random graphical representation $\mathcal{G}_\infty$ on $\mathbb{H}$. This limiting graphical representation is such that there is only finitely many horizontal and vertical lines crossing any compact set inside $\mathbb{H}$. On the other hand, there is an accumulation of horizontal lines at the bottom of the half plane \emph{i.e.} near the X-axis. See Figure \ref{Fig:binaryHalfPlane} for a picture showing how this graphical representation $\mathcal{G}_\infty$ looks like. 
\medskip
\begin{figure}
\includegraphics[width=16cm]{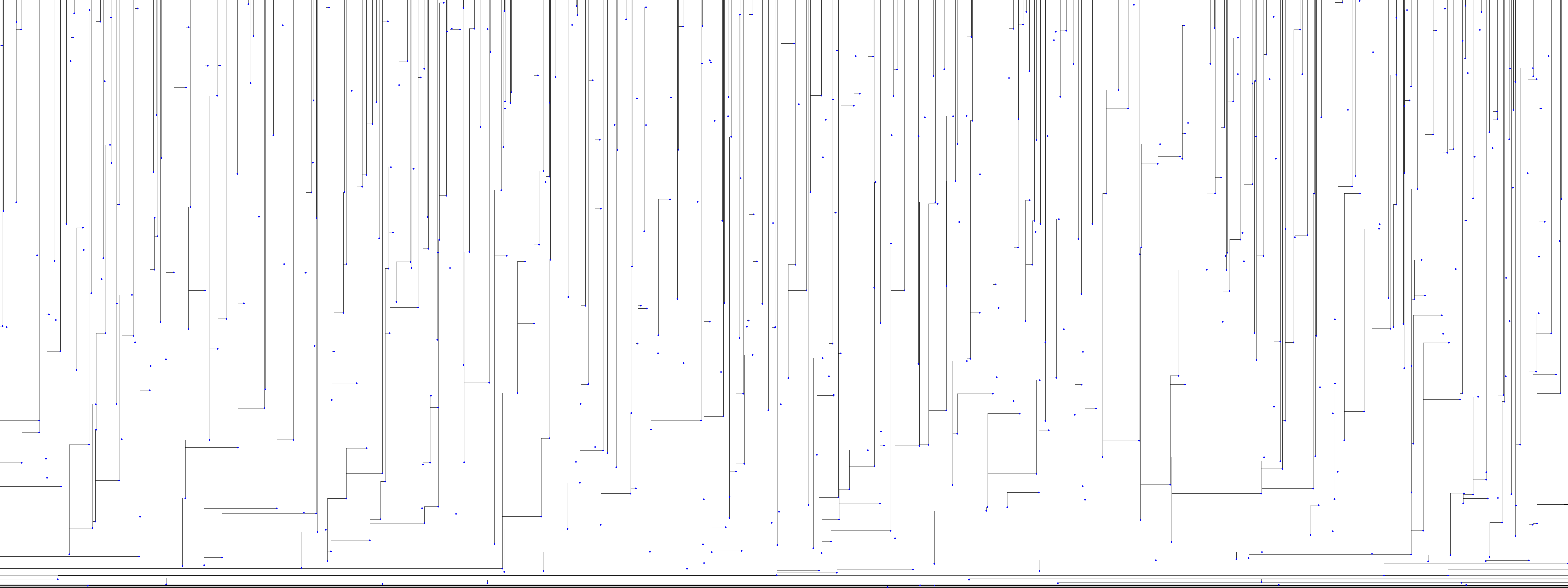}
\caption{\label{Fig:binaryHalfPlane} Simulation of the full half plane representation $\mathcal{G}_\infty$ in the case of binary heaps ($\mu = \delta_2$). The box displayed is $[0,40]\times (0,15]$. There is an accumulation of horizontal lines at $y= 0$ and of vertical lines at $y = +\infty$. }
\end{figure}

The following result is the counterpart of Theorem \ref{theo:main} for the infinite volume system.

\begin{prop} \label{Prop:infini} Let $\mu\neq \delta_1$. Let $\mathcal{G}_\infty$ denote the graphical representation of $H$ on the half plane $\mathbb{H}$. For $0<s<t$, let $R_\infty[s,t]$ be the number of horizontal lines that intersect the segment $\{0\}\times [s,t]$. We have  
$$\lim_{t\to \infty} \frac{R_\infty[1,t]}{\log t} =\E(R_\infty[1,e]) \quad \mbox{ a.s. and in expectation.} $$ 
\end{prop}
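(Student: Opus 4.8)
The plan is to exploit the scaling invariance of the infinite-volume process to turn the logarithmic growth of $R_\infty[1,t]$ into an additive ergodic statement, and then apply a subadditive/ergodic argument. The key observation is that $\mathcal{G}_\infty$ should be invariant (in distribution) under the scaling $(u,t)\mapsto (\alpha u,\alpha t)$ for $\alpha>0$, since the driving Poisson process $\Xi$ has intensity $du\,dt\,\mu$ and the half-plane dynamics are equivariant under this diagonal scaling (this scaling property is, I expect, exactly one of the ``remarkable scaling properties'' alluded to in the introduction and presumably established in \cite{BGGS}). Setting $t=e^x$ and writing $N(x) \defeq R_\infty[1,e^x]$, the natural decomposition $R_\infty[1,e^{x+y}] = R_\infty[1,e^x] + R_\infty[e^x,e^{x+y}]$ together with the scaling $R_\infty[e^x,e^{x+y}] \overset{d}{=} R_\infty[1,e^y]$ suggests that $N(x)$ behaves like a process with stationary increments indexed by the additive group $(\R_+,+)$, so that $N(x)/x \to \E(N(1)) = \E(R_\infty[1,e])$ by an ergodic theorem — provided one has the requisite stationarity and ergodicity of the increment sequence and an $L^1$ bound.

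Concretely I would proceed as follows. First, formalize the scaling invariance: show that the map $\mathcal{G}_\infty \mapsto \alpha^{-1}\mathcal{G}_\infty$ (scaling both coordinates) preserves the law, using the construction of $\mathcal{G}_\infty$ as a local limit of $\mathcal{G}_{-\infty,b}$ and the scaling invariance of $\Xi$. Second, define the discrete increment sequence $X_k \defeq R_\infty[e^{k-1},e^k]$ for $k\ge 1$; by the additivity of $R_\infty[\cdot,\cdot]$ along the segment $\{0\}\times[1,t]$ and the scaling relation, the $X_k$ form a stationary sequence with $\E(X_k) = \E(R_\infty[1,e])$, and $R_\infty[1,e^n] = \sum_{k=1}^n X_k$. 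Third, verify integrability $\E(R_\infty[1,e])<\infty$; here I would either quote the finiteness already implicit in \cite{BGGS} (the expected number of heaps over a bounded strip is finite, essentially the $c_\mu$ estimate) or re-derive it by comparing the horizontal lines through $\{0\}$ on $[1,e]$ with the number of fresh Poisson arrivals in a suitable region. Fourth, establish ergodicity of $(X_k)$: the cleanest route is to show the increments depend on disjoint, independent ``slabs'' of the Poisson process in a way that gives mixing — more precisely, $R_\infty[e^{k-1},e^k]$ is, up to a small boundary effect controlled by the local convergence in Theorem 4.4 of \cite{BGGS}, measurable with respect to $\Xi$ restricted to a region whose influence decays; one then invokes a mixing/$0$–$1$ law or directly Birkhoff's ergodic theorem for the shift on this stationary sequence, using that the tail $\sigma$-field is trivial because the far-away past of the half-plane process has vanishing influence. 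Fifth, upgrade from the subsequence $t=e^n$ to all $t$ by monotonicity of $t\mapsto R_\infty[1,t]$ (it is non-decreasing), sandwiching $R_\infty[1,t]$ between $R_\infty[1,e^{\lfloor \log t\rfloor}]$ and $R_\infty[1,e^{\lceil \log t\rceil}]$ and noting the ratio of the bounds tends to $1$. Finally, the $L^1$ convergence follows from the a.s. convergence together with uniform integrability of $R_\infty[1,t]/\log t$, which in turn follows from the $L^1$ (indeed $L^p$, if available) control on $X_k$ and the stationarity.

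The main obstacle I anticipate is the ergodicity/mixing step. Unlike the Hammersley process for Ulam's problem, the half-plane heap process has the delicate feature of an accumulation of horizontal lines near the $X$-axis, and $\mathcal{G}_\infty$ is constructed as a limit over the right boundary $b\to\infty$ — so $R_\infty[e^{k-1},e^k]$ is genuinely influenced by atoms of $\Xi$ far to the right of the $y$-axis, not just locally. Making precise that this long-range influence is weak enough to yield a trivial tail field (or a mixing rate) for $(X_k)$ is the technical heart of the argument, and will likely require a quantitative version of the local convergence in Theorem 4.4 of \cite{BGGS}, or a clever coupling showing that two realizations of $\mathcal{G}_\infty$ agree on $[0,e^k]\times(0,\infty)$ with high probability once one couples the Poisson process on a large enough region. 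A secondary, more routine obstacle is the careful treatment of the boundary between consecutive slabs when invoking scaling, since $R_\infty[1,t]$ counts lines crossing a vertical segment and one must ensure no double-counting or omission at the interfaces $t=e^k$.
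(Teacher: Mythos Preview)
Your overall architecture --- decompose $R_\infty[1,e^n]$ into the stationary increments $X_k=R_\infty[e^{k-1},e^k]$, apply an ergodic theorem, then sandwich for general $t$ --- matches the paper. However, your scaling map is wrong: the diagonal map $(u,t)\mapsto(\alpha u,\alpha t)$ does \emph{not} preserve the intensity $du\,dt$ (it scales it by $\alpha^{-2}$), so $\mathcal{G}_\infty$ is not invariant under it. The correct invariance, and the one the paper uses, is the hyperbolic scaling $(u,t)\mapsto(\alpha u,t/\alpha)$, which does preserve $du\,dt$ and still sends the segment $\{0\}\times[e^k,e^{k+1}]$ to $\{0\}\times[1,e]$ (the $u$-coordinate of the segment being $0$ is what makes this work). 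This is an easy fix, but as written your justification of stationarity is incorrect.

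The more substantive divergence is in the mixing step. You propose to control the long-range \emph{horizontal} influence (atoms far to the right of the $y$-axis) via a quantitative version of the local convergence $\mathcal{G}_{-\infty,b}\to\mathcal{G}_\infty$; you correctly flag this as the main obstacle, and indeed it would be hard. The paper sidesteps this entirely by decoupling \emph{vertically} in time rather than horizontally in space. The key observations are: (i) $\mathcal{G}_\infty$ restricted to $\R\times(0,t)$ is measurable with respect to the atoms of $\Xi$ below height $t$, so $(X_0,\ldots,X_n)$ depends only on atoms below $e^{n+1}$; (ii) if one deletes all atoms below $e^{n+1}$ and lets $\bar X_k$ be the resulting crossing count on $[e^k,e^{k+1}]$, then $\bar X_k$ is independent of $(X_0,\ldots,X_n)$ and, by time-translation and the hyperbolic scaling, $(\bar X_k,\ldots,\bar X_{k+m})\Rightarrow(X_0,\ldots,X_m)$ as $k\to\infty$; (iii) a monotonicity property (adding atoms below height $s$ can only \emph{decrease} the number of horizontal lines crossing $\{0\}\times[s,t]$) gives $X_k\le\bar X_k$ pointwise. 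An elementary lemma --- if $U_k\le V_k$, $(U_k)$ is tight, and $\P\{U_k=a\}-\P\{V_k=a\}\to 0$ for every $a$, then $\P\{U_k\neq V_k\}\to 0$ --- then forces $\P\{X_k\neq\bar X_k\}\to 0$, yielding mixing without any quantitative input from Theorem~4.4 of \cite{BGGS}. Finally, note that the proposition as stated does not require $\E(R_\infty[1,e])<\infty$; the paper establishes convergence in expectation trivially from stationarity (the limit may a priori be $+\infty$), and only later, in the proof for the process on $[0,1]$, identifies the limit with the finite constant $c_\mu$.
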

Let us point out that this result does not assert the finitness of $\E(R_\infty[1,e])$ (otherwise, the limit above is simply infinite). However, $\E(R_\infty[1,e])$ is indeed always finite as we shall see later.

\begin{proof} We decompose the number of horizontal lines crossing the vertical axis during the time interval $[1,e^n]$ in the following way:
$$R_\infty[1,e^n]=\sum_{i=0}^{n-1} R_\infty[e^i,e^{i+1}].$$
For any $i>0$, the  invariance of the Poisson measure under the mapping 
$$\begin{array}{ccc}
\mathbb{H} & \to& \mathbb{H}\\
(u,t) & \mapsto & (e^i u, \frac{t}{e^i})
\end{array}$$
implies that the law of  $\mathcal{G}_\infty$ is also invariant under this transformation. 
 Thus, it follows that the sequence $(R_\infty[e^i,e^{i+1}],i\ge 0)$ is  stationary. In particular, for any $i\ge 0$, the r.v. $R_\infty[e^i,e^{i+1}]$ has the same law as 
$R_\infty[1,e]$. This already  proves that 
$$\frac{\mathbb{E}[R_\infty[1,e^n]]}{n} =\E(R_\infty[1,e]).$$ 
The sequence $(R_\infty[e^i,e^{i+1}],i\ge 0)$ is clearly not i.i.d. Yet, we will show that it is ergodic since it is mixing. Thus, the ergodic theorem will implies that
\begin{equation}\label{ct1}
\lim_{n\to \infty} \frac{R_\infty[1,e^n]}{n} = \E(R_\infty[1,e])\quad\hbox{a.s.}
\end{equation}
Finally, from \eqref{ct1} and using the monotony of $R_\infty[1,t]$ with respect to $t$, we will conclude that
$$\lim_{t\to \infty} \frac{R_\infty[1,t]}{\log t} =\E(R_\infty[1,e]) \quad \mbox{ a.s.}$$

Thus, it remains to prove that the sequence  $(X_i:=R_\infty[e^i,e^{i+1}],i\ge 0)$ is mixing \emph{i.e.} that for any $n,m$ and any bounded functions $f:\R^{n+1}\mapsto \R$ and $g:\R^{m+1}\mapsto \R$,
 \begin{equation}\label{mix1}
\lim_{k\to \infty}\E\left[f(X_0,\ldots,X_n)g(X_{k},\ldots,X_{k+m})\right]= \E\left[f(X_0,\ldots,X_n)\right]\E\left[g(X_{0},\ldots,X_{m})\right].
\end{equation}
  Fix $n,m\ge 0$ and  $k>n+1$. Let $\bar{X}_k$ denote the number of horizontal lines crossing the segment $\{0\}\times [e^k,e^{k+1}]$ when we remove all the atoms of $\Xi$ below height $e^{n+1}$.
By construction, $\mathcal{G}_\infty\cap (\R\times (0,t))$ is determined by the atoms of $\Xi$ below height $t$. In particular, this implies that $(X_0,\ldots,X_n)$ is independent of $(\bar{X}_k, \ldots, \bar{X}_{k+m})$. Moreover, up to a translation, the graphical representation obtained by removing all atoms below a given height as the same law as $\mathcal{G_\infty}$. Thus, the vector $(\bar{X}_k, \ldots, \bar{X}_{k+m})$ has the same distribution as the vector $(R_\infty[e^k-e^{n+1},e^{k+1}-e^{n+1}],\ldots,R_\infty[e^{k+m}-e^{n+1},e^{k+m+1}-e^{n+1}])$, which is also equal, using the scaling property, to the law of $(R_\infty[1-e^{n+1-k},e-e^{n+1-k}],\ldots, R_\infty[e^{m}-e^{n+1-k},e^{m+1}-e^{n+1-k}])$. Therefore, we obtain the limit in law 
  \begin{equation}\label{lim1}
  \lim_{k\to \infty} (\bar{X}_k,\ldots,\bar{X}_{k+m}) \overset{\mathcal{L}}{=}  (X_0,\ldots,X_m).
  \end{equation}
On the other hand, adding atoms below a given height $s$ can only decrease the number of horizontal lines crossing the segment $\{0\}\times [s,t]$ (see for instance Equation (12) of \cite{BGGS} for more details). This monotonicity result implies that, for any $k>n+1$,
  \begin{equation}\label{mono1}
 X_k \le \bar{X}_k.
  \end{equation}We can now write 
\begin{multline*}
\E\left[f(X_0,\ldots,X_n)g(X_{k},\ldots,X_{k+m})\right]\\
\begin{aligned}
&=\E\left[f(X_0,\ldots,X_n)g(\bar{X}_{k},\ldots,\bar{X}_{k+m})\right]+ \E\left[f(X_0,\ldots,X_n)(g(\bar{X}_{k},\ldots,\bar{X}_{k+m})-g(X_{k},\ldots,X_{k+m}))\right]
 \\
&=\E\left[f(X_0,\ldots,X_n)\right]\E\left[g(\bar{X}_{k},\ldots,\bar{X}_{k+m})\right]
+ \E\left[f(X_0,\ldots,X_n)(g(\bar{X}_{k},\ldots,\bar{X}_{k+m})-g(X_{k},\ldots,X_{k+m}))\right]. &
\end{aligned}
\end{multline*}  
The first term of the r.h.s. of the last equality tends to $\E\left[f(X_0,\ldots,X_n)\right]\E\left[g(X_{0},\ldots,X_{m})\right]$ according to \eqref{lim1}. Concerning the second term, we write 
\begin{eqnarray*}
\E\left[f(X_0,\ldots,X_n)(g(\bar{X}_{k},\ldots,\bar{X}_{k+m})-g(X_{k},\ldots,X_{k+m}))\right]&\le& 2||f||_\infty ||g||_\infty \P\{\exists i \le m, \; \bar{X}_{k+i}\neq X_{k+i}\}\\
  & \le & 2 (m+1) ||f||_\infty ||g||_\infty \sup_{i\ge k} \P\{ \bar{X}_{i}\neq X_{i}\}.
\end{eqnarray*}
Finally, the following easy lemma ascertains that  $\sup_{i\ge k} \P\{ \bar{X}_{i}\neq X_{i}\}$ tends to 0 which concludes the proof of  \eqref{mix1}.

\end{proof}

\begin{lem}
Let $(U_k)$ and $(V_k)$ be two sequences of integer-valued random variables such that
\begin{enumerate}
\item[\textup{(i)}]\ $U_k \leq V_k$ for all $k$.
\item[\textup{(ii)}] The sequence $(U_k)$ is tight.
\item[\textup{(iii)}] $\lim_{k\to\infty} \P\{U_k = a\} - \P\{V_k = a\} = 0$ for every $a$. 
\end{enumerate} 
Then, $$\lim_{k\to\infty}\P\{U_k \neq V_k\} = 0.$$
\end{lem}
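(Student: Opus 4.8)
The plan is as follows. Since $U_k\le V_k$ for all $k$, the event $\{U_k\neq V_k\}$ coincides with $\{U_k<V_k\}$, so it suffices to prove $\P\{U_k<V_k\}\to 0$. Monotonicity also gives the useful inclusion $\{V_k\le a\}\subseteq\{U_k\le a\}$ for every integer $a$, so that
$$f_k(a):=\P\{U_k\le a<V_k\}=\P\{U_k\le a\}-\P\{V_k\le a\}$$
is nonnegative. The heart of the argument is the intermediate claim that $f_k(a)\to 0$ as $k\to\infty$ for each fixed $a$.

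To establish this claim I would fix $\epsilon>0$ and use tightness (ii) to pick an integer $M>|a|$ with $\sup_k\P\{|U_k|>M\}<\epsilon$. Splitting $\{U_k\le a\}$ according to whether $U_k\ge -M$, and bounding $\P\{V_k\le a\}\ge\P\{-M\le V_k\le a\}$, one gets
$$f_k(a)\le\Big(\P\{-M\le U_k\le a\}-\P\{-M\le V_k\le a\}\Big)+\P\{U_k<-M\}.$$
The bracket equals the finite sum $\sum_{b=-M}^{a}\big(\P\{U_k=b\}-\P\{V_k=b\}\big)$, each term of which tends to $0$ by hypothesis (iii), while $\P\{U_k<-M\}<\epsilon$ uniformly in $k$; hence $\limsup_k f_k(a)\le\epsilon$, and letting $\epsilon\downarrow 0$ proves the claim.

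For the conclusion I would decompose $\{U_k<V_k\}$ according to the value taken by $U_k$, obtaining the disjoint union
$$\P\{U_k<V_k\}=\sum_{a\in\mathbb{Z}}\P\{U_k=a,\,V_k>a\},$$
each summand being bounded both by $\P\{U_k=a\}$ and by $f_k(a)$. Given $\epsilon>0$, choose $M$ with $\sup_k\P\{|U_k|>M\}<\epsilon$; then the tail satisfies $\sum_{|a|>M}\P\{U_k=a,V_k>a\}\le\P\{|U_k|>M\}<\epsilon$, while the central block $\sum_{|a|\le M}\P\{U_k=a,V_k>a\}\le\sum_{|a|\le M}f_k(a)$ is a finite sum of terms vanishing by the intermediate claim. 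Thus $\limsup_k\P\{U_k<V_k\}\le\epsilon$ for every $\epsilon>0$, which is the desired conclusion.

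The only delicate point, and the place where all three hypotheses genuinely interact, is that (iii) controls only finitely many ``crossing levels'' $a$ at a time, whereas there are infinitely many to account for: tightness of $(U_k)$ is precisely what permits, uniformly in $k$, the truncation to a bounded window of levels in both steps above, and monotonicity is what turns a crossing event into a difference of two nested distribution functions. A more probabilistic alternative would run by contradiction: on a subsequence along which $\P\{U_k<V_k\}$ stays bounded below, one extracts via (ii) and (iii) a further subsequence along which $(U_k,V_k)$ converges in law to a pair $(U_\infty,V_\infty)$ with $U_\infty\stackrel{d}{=}V_\infty$ and $U_\infty\le V_\infty$ a.s.; any such pair satisfies $U_\infty=V_\infty$ a.s., contradicting $\P\{U_k\neq V_k\}\to\P\{U_\infty\neq V_\infty\}=0$.
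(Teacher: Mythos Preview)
Your argument is correct and follows the same two–step template as the paper: a pointwise ``level $a$'' claim, followed by a tightness truncation over the values of $U_k$. The difference lies in the choice of intermediate quantity. The paper shows by induction on $i$ (starting at $i=0$, hence implicitly treating the variables as nonnegative) that $\P\{U_k=i,\,V_k\neq i\}\to 0$, via the bound
\[
\P\{U_k=i,\,V_k\neq i\}\;\le\;\big(\P\{U_k=i\}-\P\{V_k=i\}\big)+\sum_{j<i}\P\{U_k=j,\,V_k\neq j\}.
\]
You instead work with the cumulative crossing probability $f_k(a)=\P\{U_k\le a\}-\P\{V_k\le a\}$ and prove it vanishes directly, invoking tightness already at this intermediate step to control the left tail. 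Your route therefore uses tightness twice but dispenses with the induction, and it handles general $\mathbb{Z}$-valued variables without needing a lowest level to anchor the induction; the paper's version is marginally slicker when the variables are nonnegative (as they are in the application, being counts of lines). The final truncation step is essentially identical in both proofs. The subsequential alternative you sketch at the end is also sound once one notes that (ii) and (iii) together force any weak limit of $(V_k)$ along a subsequence to coincide with that of $(U_k)$, which supplies the joint tightness needed to extract a limiting pair.
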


\begin{proof}
We first show by induction on $i$ that
$$\lim_{k \to \infty}\P\{V_k\neq i,  U_k= i\}=0.$$
Indeed, we find, using (i), that
\begin{eqnarray}\label{papa}
\P\{V_k\neq 0,  U_k= 0\}&=& \P\{U_k= 0\}- \P\{V_k= 0,  U_k= 0\}\\
&=&\P\{U_k= 0\}- \P\{V_k= 0\},
\end{eqnarray}
which, according to (iii), tends to $0$ as $k$ tends to infinity. Now, for $i\ge 1$, we write
\begin{eqnarray*}
\P\{V_k\neq i,  U_k= i\}&=& \P\{U_k= i \} - \P\{V_k= i,\,  U_k= i\}\\
&=&\P\{U_k= i\} - \P\{V_k= i\}+\P\{V_k= i,\,  U_k<i\}\\
&\le &\P\{U_k= i\} - \P\{V_k= i \}+\sum_{j<i}\P\{V_k\neq j,\,  U_k=j\}.
\end{eqnarray*}
 The induction hypothesis combined with (iii) implies that
the r.h.s. of the last equation tends to 0 as $k$ tends to infinity. Hence, \eqref{papa} holds for all $i$. 
Finally, writing that, for any $A>0$,
$$\P\{V_k\neq U_k\}\le \P\{U_k\ge A\}+ \sum_{i< A} \P\{V_k\neq i,  U_k= i\},$$
and using the tightness of the sequence $(U_k)$,
 we deduce that $\P(V_k\neq U_k)$ tends to 0 as $k$ tends to infinity.
\end{proof}

\section{Almost-sure convergence for the process on $[0,1]$}

We now translate Proposition \ref{Prop:infini} for the Hammersley process defined on the finite interval $[0,1]$.
We use the notation $R_{[a,b]}[s,t]$ for the number of horizontal lines crossing the segment $\{0\}\times [s,t]$ in the graphical representation $\mathcal{G}_{a,b}$ obtained by using only the atoms of $\Xi$ in the strip $[a,b]\times(0,\infty)$.

\begin{prop}\label{Prop:01}  Assume that $\mu\neq \delta_1$. We have
$$\lim_{t\to \infty} \frac{R_{[0,1]}[0,t]}{\log t} =\E\left[R_\infty[1,e]\right] \quad \mbox{ a.s. and in  $L^1$.} $$ 
\end{prop}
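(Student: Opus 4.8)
The plan is to deduce Proposition \ref{Prop:01} from Proposition \ref{Prop:infini} by sandwiching the finite-interval quantity $R_{[0,1]}[0,t]$ between quantities governed by the infinite-volume process, and then using the scaling invariance of $\mathcal{G}_\infty$ to identify the constant. First I would recall the monotonicity in the left boundary: since incoming particles never affect particles already present on their right, adding atoms to the left of the strip can only increase the number of horizontal lines crossing $\{0\}\times[s,t]$; formally, for $a'<a$ one has $R_{[a,b]}[s,t]\le R_{[a',b]}[s,t]$, and letting $a'\to-\infty$ gives $R_{[0,1]}[0,t]\le R_{(-\infty,1]}[0,t]$. In the other direction, I would compare $R_{[0,1]}[0,t]$ with the infinite-volume count $R_\infty[s,t]$ on a slightly shifted and rescaled window: because $\mathcal{G}_{-\infty,b}$ converges locally a.s. to $\mathcal{G}_\infty$ as $b\to\infty$ (Theorem 4.4 of \cite{BGGS}, recalled above), the counts $R_{(-\infty,1]}[s,t]$ and the shifted $R_{(-\infty,b]}$-counts are, for fixed window, eventually equal to the corresponding $R_\infty$-counts, up to a right-boundary effect that I will control.

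The key mechanism for turning the half-line statement into the $[0,1]$ statement is the scaling map $(u,t)\mapsto(e^i u, t/e^i)$ used in the proof of Proposition \ref{Prop:infini}: the process on $[0,1]$ observed up to time $t$ is, after applying the inverse scaling, the process on $[0,e^i]$ observed up to time $t e^{-i}$, i.e. as $t$ grows the interval $[0,1]$ ``looks like'' a shrinking window near the X-axis of the full half-plane picture, exactly where horizontal lines accumulate. Concretely, I would choose $i=i(t)$ with $e^i\approx \log t$ (or some slowly growing function), rescale so that $[0,1]$ becomes $[0,e^i]$, and observe that on the time interval of length $\sim t/e^i$ the rescaled finite-strip process agrees with $\mathcal{G}_\infty$ on the box $[0,e^i]\times(0, t/e^i]$ with overwhelming probability, since on any fixed compact set the local convergence $\mathcal{G}_{-\infty,b}\to\mathcal{G}_\infty$ holds and the relevant box grows only polylogarithmically. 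By translation invariance of $\mathcal{G}_\infty$ in the space variable, the number of horizontal lines crossing $\{0\}\times[1,te^{-i}]$ in $\mathcal{G}_\infty$ restricted to $[0,e^i]\times(0,te^{-i}]$ is then comparable to $R_\infty[1, te^{-i}]$, to which Proposition \ref{Prop:infini} applies: it is $\sim (\log t - i)\,\E[R_\infty[1,e]] \sim (\log t)\,\E[R_\infty[1,e]]$ a.s.

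I would then make the sandwich precise. For the upper bound, $R_{[0,1]}[0,t]\le R_{(-\infty,1]}[0,t]$, and $R_{(-\infty,1]}[0,t] = R_{(-\infty,1]}[0,1] + R_{(-\infty,1]}[1,t]$; the first term is a.s. finite (only finitely many horizontal lines cross any compact set in $\mathbb{H}$) and the second is, by the local-convergence identification together with the argument that the right-boundary effect at $b=1$ only removes lines, at most $R_\infty[1,t](1+o(1))$, hence $\le (\log t)(\E[R_\infty[1,e]]+o(1))$ a.s. For the lower bound, I would use the rescaling argument above to produce, for suitable $i(t)\to\infty$, a lower bound $R_{[0,1]}[0,t]\ge R_\infty[1,te^{-i}] - (\text{boundary correction})$, where the boundary correction is the number of horizontal lines affected by the finite right boundary at $e^i$ and by the truncation at the bottom; both are controlled by the a.s. finiteness of line counts on compacts after rescaling, and since $i(t)=o(\log t)$ these corrections are $o(\log t)$. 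Dividing by $\log t$ and sending $t\to\infty$ pins the limit at $\E[R_\infty[1,e]]$ a.s.

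For the $L^1$ convergence I would first note $\E[R_\infty[1,e]]<\infty$ (as asserted in the remark after Proposition \ref{Prop:infini}, to be justified — e.g. by the logarithmic growth of $\E[\dr(n)]$ from \cite{BGGS} transferred along the embedding, or directly), so the candidate limit is finite. Then the upper bound $R_{[0,1]}[0,t]/\log t \le (R_{(-\infty,1]}[0,1] + R_\infty[1,t])/\log t$ provides an integrable dominating control: $\E[R_\infty[1,e^n]]/n = \E[R_\infty[1,e]]$ exactly, so $(R_\infty[1,t]/\log t)_{t\ge e}$ is bounded in $L^1$ and, being a ratio along a stationary ergodic average, is uniformly integrable; combined with $R_{(-\infty,1]}[0,1]/\log t\to 0$ in $L^1$, Vitali's convergence theorem upgrades the a.s. convergence to $L^1$. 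The main obstacle I anticipate is the lower-bound step: making rigorous the claim that, after the logarithmic rescaling, the finite strip $[0,e^{i(t)}]$ process agrees with $\mathcal{G}_\infty$ on the relevant (polylogarithmically growing) box with error $o(\log t)$ in the line count — this requires a quantitative-enough version of the local a.s. convergence $\mathcal{G}_{-\infty,b}\to\mathcal{G}_\infty$, or a Borel–Cantelli argument over a sparse sequence of times $t$ together with monotonicity to fill the gaps, and careful bookkeeping of the interaction between the bottom truncation, the finite right boundary, and the scaling.
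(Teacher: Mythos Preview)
Your upper bound is essentially the paper's, modulo a small confusion: enlarging the strip to the \emph{left} changes nothing inside $[0,1]\times(0,\infty)$ (indeed $\mathcal{G}_{0,1}=\mathcal{G}_{-\infty,1}\big|_{[0,1]\times(0,\infty)}$), so your first inequality is in fact an equality. The relevant monotonicity is in the \emph{right} boundary, and it gives $R_{[0,1]}[s,t]\le R_\infty[s,t]$ directly, whence $R_{[0,1]}[0,t]\le R_{[0,1]}[0,1]+R_\infty[1,t]$ with $R_{[0,1]}[0,1]$ dominated by the number of atoms in $[0,1]^2$.

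The genuine gap is in your lower bound. After your rescaling you need $R_{[0,e^{i(t)}]}[\cdot]$ to be within $o(\log t)$ of $R_\infty[\cdot]$ on a growing box, but the only available comparison says $R_{[0,e^i]}\le R_\infty$ (the wrong direction for a lower bound), and the local convergence $\mathcal{G}_{-\infty,b}\to\mathcal{G}_\infty$ is stated only on fixed compacts, with no rate. You correctly flag this as the main obstacle; as written it is not a proof. The paper avoids any coupling with $\mathcal{G}_\infty$ for the lower bound: for each fixed $N$ it sets $X_i^N:=R_{[0,e^{N-i}]}[e^i,e^{i+1}]$, so that $[0,e^{N-i}]\subset[0,1]$ for $i\ge N$ gives the \emph{right-way} inequality $R_{[0,1]}[e^i,e^{i+1}]\ge X_i^N$. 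By scaling, $(X_i^N)_{i\ge N}$ is stationary with marginal law that of $R_{[0,e^N]}[1,e]$, and the same mixing argument as in Proposition~\ref{Prop:infini} makes it ergodic; the ergodic theorem then yields $\liminf_n R_{[0,1]}[0,e^n]/n\ge \E\big[R_{[0,e^N]}[1,e]\big]$. Finally one lets $N\to\infty$ and uses $R_{[0,e^N]}[1,e]\uparrow R_\infty[1,e]$ together with monotone convergence. This two-step limit (first $n\to\infty$, then $N\to\infty$) replaces your quantitative local-convergence step entirely.

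For $L^1$, the paper's route is also simpler than Vitali: with $X_t=R_{[0,1]}[0,t]/\log t$ and $c_\mu<\infty$ from \cite{BGGS}, write $\E|X_t-c_\mu|=\E(X_t-c_\mu)+2\E\big[(c_\mu-X_t)^+\big]$; the first term vanishes by the convergence of expectations just established, and the second by dominated convergence since $(c_\mu-X_t)^+\le c_\mu$.
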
 
\begin{proof}
As we already noticed, $\mathcal{G}_{0,1}$ coincides with $\mathcal{G}_{-\infty,1}$  restricted to the strip $[0,1]\times (0,\infty)$. Furthermore, taking into account the atoms inside $(1,+\infty) \times (0,\infty)$ can create new horizontal lines inside $[0,1]\times (0,\infty)$ but cannot remove those already present (see Section 2.3.1 of \cite{BGGS} for details). Thus, the horizontal lines of $\mathcal{G}_{0,1}$ are a subset of the horizontal lines of $\mathcal{G}_\infty$.
This domination implies in particular that 
$$R_{[0,1]}[s,t]\le R_\infty[s,t].$$
In particular, we get
$$R_{[0,1]}[0,t]\le R_{[0,1]}[0,1]+R_\infty[1,t]$$
(we need this splitting since $R_\infty[0,1]$ is infinite).
The quantity $R_{[0,1]}[0,1]$ is bounded by the number of atoms in the box $[0,1]^2$.
Thus, in view of Proposition \ref{Prop:infini}, we find that
$$\limsup_{t\to \infty} \frac{R_{[0,1]}[0,t]}{\log t} \le \E(R_\infty[1,e]) \quad \mbox{ a.s. and in expectation.}$$
Let us now prove the matching lower bound. Fix some $N\ge 0$. For $n\ge N$,  we decompose $R_{[0,1]}[0,e^n]$ in the following way
$$R_{[0,1]}[0,e^n]=R_{[0,1]}[0,e^N]+\sum_{i=N}^{n-1} R_{[0,1]}[e^i,e^{i+1}].$$
For $i\ge N$, let $X_i^N\defeq R_{[0,e^{N-i}]}[e^i,e^{i+1}]$ be the number of horizontal lines attached to the Y-axis between heights $e^i$ and $e^{i+1}$ when we consider only the atoms of $\Xi$ with absciss in the interval $[0,e^{N-i}]$. Using the same monotonicity argument as above, we have, for any $i\ge N$,
$$R_{[0,1]}[e^i,e^{i+1}]\ge X_i^N.$$
Thus, for $n\ge N$, we get
$$R_{[0,1]}[0,e^n]\ge R_{[0,1]}[0,e^N]+\sum_{i=N}^{n-1} X_i^N.$$
Using again the invariance of the law of $\Xi$ under the mappings
$$\begin{array}{ccc}
\mathbb{H} & \to& \mathbb{H}\\
(u,t) & \mapsto & ( e^i u, \frac{t}{e^i})
\end{array}$$
we deduce that the sequence $(X_i^N, i\ge N)$ is stationary. In particular, for any $i\ge N$,  $X_i^N$ has the same law as $R_{[0,e^N]}[1,e]$. Again, we prove that the sequence is mixing \emph{i.e.} for any $n,m$ and any bounded functions $f:\R^{n+1}\mapsto \R$ and $g:\R^{m+1}\mapsto \R$,
 \begin{equation*}
\lim_{k\to \infty}\E\left[f(X^N_N,\ldots,X^N_{N+n})g(X^N_{k},\ldots,X^N_{k+m})\right]= \E\left[f(X^N_N,\ldots,X^N_{N+n})\right]\E\left[g(X^N_{N},\ldots,X^N_{N+m})\right].
\end{equation*}
The argument is the same  as in the previous section. Indeed, consider, for $k>N+n$, the number  $\bar{X}^N_k$ of horizontal lines crossing the Y-axis between height $e^k$ and $e^{k+1}$ when we only take into account the atoms of $\Xi$ in the domain $[0,e^{N-k}]\times [e^{N+n+1},\infty)$. It is easily checked that the following holds
\begin{enumerate}
\item $X^N_k \le \bar{X}^N_k$.
\item $(\bar{X}^N_k,\ldots, \bar{X}^N_{k+m})$ is independent of $(X^N_N,\ldots,X_{N+n}^N)$.
\item  $\lim_{k\to \infty} (\bar{X}^N_k,\ldots, \bar{X}^N_{k+m}) \overset{\mathcal{L}}{=}  (X^N_N,\ldots,X_{N+m}^N).$
\end{enumerate}
These three properties imply, just as for Proposition  \ref{Prop:infini}, that the sequence is mixing. Thus, the ergodic theorem implies the almost sure limit
$$\liminf_{n\to \infty} \frac{R_{[0,1]}[0,e^n]}{n}\ge \lim_{n\to \infty} \frac{\sum_{i=N}^{n-1} X_i^N}{n} = \E\left[X_N^N\right]=\E\big[R_{[0,e^N]}[1,e]\big].$$
On the other hand, the sequence $(R_{[0,e^N]}[1,e],  N\ge 0)$  increases to $R_\infty[1,e]$ as $N$ tends to infinity. Thus, the monotone convergence theorem yields
$$\lim_{N \to \infty }\E[X_N^N]=\E\big[R_\infty[1,e]\big]$$
which proves the  convergence of $R_{[0,1]}[0,t]/\log t$ towards $\E\big[R_\infty[1,e]\big]$ almost surely and in expectation.

\medskip

It remains to prove the $L^1$ convergence. Let us first recall that Theorem 2.1 of \cite{BGGS} states that the limit in Proposition \ref{Prop:01}
$$c_\mu\defeq \lim_{t\to \infty} \frac{\E\left[R_{[0,1]}[0,t]\right]}{\log t}$$ 
is finite for any $\mu \neq \delta_1$. We write
\begin{equation*}
\E\left[\left|\frac{R_{[0,1]}[0,t]}{\log t}-c_\mu \right|\right]= \E\left[\left(\frac{R_{[0,1]}[0,t]}{\log t}-c_\mu\right)\right]+ 2\E\left[\left(c_\mu-\frac{R_{[0,1]}[0,t]}{\log t}\right)^+\right].
\end{equation*}
The random variable $\left(c_\mu-\frac{R_{[0,1]}[0,t]}{\log t}\right)^+$ converges a.s. to $0$ and is bounded by $c_\mu$. Thus, the previous convergence in expectation combined with the dominated convergence theorem yield the $L^1$ convergence of $R_{[0,1]}[0,t]/\log t$.

\end{proof}
\begin{rem} In a previous paper \cite{BGGS}, it was shown that the infinite graphical representation exists, which is the same as saying that $R_\infty(s,t)$ is finite for any $0<s<t$. However, it was not proved that the expectation of $R_\infty(s,t)$ is also finite. This is now a consequence of the previous proposition combined with the main result of \cite{BGGS} stating that $c_\mu$ is always finite. Still, we point out that the arguments presented here do not allow, by themselves, to recover that $c_\mu$ is finite.
\end{rem}

We now have  all the tools needed to prove Theorem \ref{theo:main}.  
\begin{proof}[Proof of Theorem \ref{theo:main}]
Recall that the processes $\dr$ and $R_{[0,1]}$ are time changed of each other:
$$\dr(n)=R_{[0,1]}[0,t(n)],$$
where
$$t(n)\defeq \{t\ge 0, \mbox{card}(\Xi\cap[0,1]\times[0,t]\times \N)=n\}$$ counts the number of atoms of $\Xi$ inside the box $[0,1]\times[0,t]$. Since $t(n)/n$ tends a.s. to $1$ as $n$ tends to infinity, we get from Proposition \ref{Prop:01} that
$$\lim_{n\to \infty}\frac{\dr(n)}{\log n}=\lim_{n\to \infty} \frac{R_{[0,1]}[0,t(n)]}{\log t(n)}\frac{\log t(n)}{\log n}= \E\left[R_\infty[1,e]\right] =c_\mu \qquad \mbox{ a.s.}$$
Furthermore, using the convergence in expectation of $\frac{\dr(n)}{\log n}$ towards the same limit (see Theorem 2.1 of \cite{BGGS}), we also deduce the $L^1$ convergence.
\end{proof}

\end{document}